\numberwithin{equation}{section}
\newtheorem{theorem}{Theorem}[section] %
\newtheorem{lemma}[theorem]{Lemma} %
\newtheorem{corollary}[theorem]{Corollary} %
\newtheorem{problem}[theorem]{Problem} %
\newtheorem{remark}[theorem]{Remark} %
\begin{document}
\title{On the values of representation functions II}
\author{{Xing-Wang Jiang\footnote{xwjiangnj@sina.com(X.-W. Jiang)}, Csaba S\'{a}ndor\footnote{csandor@math.bme.hu(C. S\'{a}ndor)}, Quan-Hui Yang\footnote{yangquanhui01@163.com(Q.-H. Yang)} }\\
\small  *School of Mathematical Sciences and Institute of Mathematics,\\\small Nanjing Normal University, Nanjing 210023, China\\
\small  $\dagger$Institute of Mathematics, Budapest University of Technology and Economics,\\
\small  H-1529 B. O. Box, Hungary\\
\small  $\ddagger$School of Mathematics and Statistics, Nanjing University of Information \\\small Science and Technology,
Nanjing 210044, China
}
\date{}
\maketitle \baselineskip 18pt \maketitle \baselineskip 18pt

{\bf Abstract.}
For a set $A$ of nonnegative integers, let $R_2(A,n)$ and $R_3(A,n)$ denote the number of solutions to $n=a+a'$ with $a,a'\in A$, $a<a'$ and $a\leq a'$, respectively. In this paper, we prove that, if $A\subseteq \mathbb{N}$ and $N$ is a positive integer such that $R_2(A,n)=R_2(\mathbb{N}\setminus A,n)$ for all $n\geq2N-1$, then for any $\theta$ with $0<\theta<\frac{2\log2-\log3}{42\log 2-9\log3}$, the set of integers $n$ with $R_2(A,n)=\frac{n}{8}+O(n^{1-\theta})$ has density one. The similar result holds for $R_3(A,n)$. These improve the results of the first author.

 \vskip 3mm
 {\bf 2010 Mathematics Subject Classification:} 11B34, 11B83

 {\bf Keywords and phrases:} partition, representation function, Thue-Morse sequence, S\'{a}rk\"{o}zy's problem
\vskip 5mm

\section{Introduction}
Let $\mathbb{N}$ be the set of nonnegative integers. For a set $A\subseteq\mathbb{N}$, let $R_2(A,n)$ and $R_3(A,n)$ denote the number of solutions to $a+a'=n$ with $a,a'\in A$, $a<a'$ and $a\leq a'$, respectively.
In the last few decades, the representation function was a popular topic that was studied by Erd\H{o}s, S\'{a}rk\"{o}zy and S\'{o}s in a series of papers, see \cite{ES1,ES2,ESS1}. S\'{a}rk\"{o}zy asked that whether there exist two sets $A$ and $B$ with infinite symmetric difference such that $R_i(A,n)=R_i(B,n)$ for all sufficiently large integers $n$. Let $A_0$ be the set of all nonnegative integers $n$ with even number of ones in the binary representation of $n$, and let $B_0=\mathbb{N}\setminus A_0$. The sequence $A_0$ is called Thue-Morse sequence. In 2002, Dombi \cite{D} answered this problem affirmatively by proving that $R_2(A_0,n)=R_2(B_0,n)$ for all $n\geq0$. In 2003, Chen and Wang \cite{CW} proved that the set of nonnegative integers can be partitioned into two subsets $A$ and $B$ such that $R_3(A,n)=R_3(B,n)$ for all $n\geq1$. In 2004, S\'{a}ndor \cite{S} gave the precise formulations as following.
\vskip 2mm
\noindent\text{Theorem A} \cite[Theorem 1]{S}.
{\it Let $N$ be a positive integer. Then $R_2(A,n)=R_2(\mathbb{N}\setminus A,n)$ for all $n\geq 2N-1$ if and only if $|A\cap[0,2N-1]|=N$ and $2m\in A\Leftrightarrow m\in A,~2m+1\in A\Leftrightarrow m\notin A$ for all $m\geq N$.}
\vskip 2mm
\noindent\text{Theorem B} \cite[Theorem 2]{S}.
{\it Let $N$ be a positive integer. Then $R_3(A,n)=R_3(\mathbb{N}\setminus A,n)$ for all $n\geq 2N-1$ if and only if $|A\cap[0,2N-1]|=N$ and $2m\in A\Leftrightarrow m\notin A,~2m+1\in A\Leftrightarrow m\in A$ for all $m\geq N$.}
\vskip 2mm
Let $R_{A,B}(n)$ be the number of solutions to
$a+b=n,a\in A,b\in B.$ In 2011, Chen \cite{C} studied the range of $R_i(A,n)$ for the first time and proved the
following results.
\vskip 2mm
\noindent\text{Theorem C} \cite[Theorem 1.1, Theorem 1.4 (i)]{C}.
{\it (i) Let $A$ be a subset of $\mathbb{N}$ and $N$ be a positive integer such that $R_2(A,n)=R_2(\mathbb{N}\setminus A,n)$ for all $n\geq 2N-1$. If $|A\cap A_0|=+\infty$ and $|A\cap B_0|=+\infty$, then for all $n\geq1$, we have
$$R_2(A,n)\geq\frac{n}{40N(N+1)}-1,~~R_{A,\mathbb{N}\setminus A}(n)\geq \frac{n}{20N(N+1)}-1.$$
(ii) Let $A$ be a subset of $\mathbb{N}$ and $N$ be a positive integer such that $R_3(A,n)=R_3(\mathbb{N}\setminus A,n)$ for all $n\geq 2N-1$. Then for all $n\geq1$, we have
$$R_3(A,n)\geq\frac{n}{60(N+1)}-\frac{2N}{3},~~R_{A,\mathbb{N}\setminus A}(n)\geq \frac{n}{30(N+1)}-\frac{4N}{3}.$$}

\vskip -4mm
\noindent\text{Theorem D} (\cite[Theorem 1.2, Theorem 1.5]{C}).
{\it (i) Let $A$ be a subset of $\mathbb{N}$ and $N$ be a positive integer such that $R_2(A,n)=R_2(\mathbb{N}\setminus A,n)$ for all $n\geq 2N-1$. Then, for any function $f$ with $f(x)\rightarrow +\infty$ as $x\rightarrow +\infty$, the set of integers $n$ with
$$R_2(A,n)\geq \frac{n}{16}-f(n),~R_{A,\mathbb{N}\setminus A}(n)\geq \frac{n}{8}-f(n)$$
has the density one.\\
(ii) Let $A$ be a subset of $\mathbb{N}$ and $N$ be a positive integer such that $R_3(A,n)=R_3(\mathbb{N}\setminus A,n)$ for all $n\geq 2N-1$. Then, for any function $f$ with $f(x)\rightarrow +\infty$ as $x\rightarrow +\infty$, the set of integers $n$ with
$$R_3(A,n)\geq \frac{n}{16}-f(n),~R_{A,\mathbb{N}\setminus A}(n)\geq \frac{n}{8}-f(n)$$
has the density one.}
\vskip 2mm
In 2018, Jiang \cite{J} improved the results of Theorem D.
\vskip 2mm
\noindent\text{Theorem E} \cite[Corollary 1.2]{J}.
{\it Let $A$ be a subset of $\mathbb{N}$ and $N$ be a positive integer such that $R_2(A,n)=R_2(\mathbb{N}\setminus A,n)$ for all $n\geq 2N-1$. Then, for any $\epsilon>0$, the set of integers $n$ with
$$\left(\frac{1}{8}-\epsilon\right)n\leq R_2(A,n)\leq\left(\frac{1}{8}+\epsilon\right)n$$
has density one.}
\vskip 2mm
Jiang claimed that if $A\subseteq \mathbb{N}$ and $N$ is a positive integer such that $R_3(A,n)=R_3(\mathbb{N}\setminus A,n)$ for all $n\geq 2N-1$, then the same result holds for $R_3(A,n)$. For more
related results, see \cite{CL,CSST,T,TY,YT}. In this paper, the following results are proved.

{\bf
\begin{theorem}\label{thm0}
Let $A_0$ be the Thue Morse sequence. Then for any $0<\theta<\frac{2\log2-\log3}{40\log 2-8\log3} =0.0151\dots $, there exists a $\delta=\delta(\theta)>0$ such that
$$|\{n:n\leq x,R_2(A_0,n)=\frac{n}{8}+O(n^{1-\theta})\}|=x+O(x^{1-\delta}),$$
where the implied constant depends on $\theta$.
\end{theorem}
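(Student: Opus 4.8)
The plan is to turn the statement about $R_2(A_0,n)$ into one about a self-similar correlation sum, and then to control that sum by a moment (transfer-matrix) estimate, followed by Markov's inequality and a dyadic decomposition of $[1,x]$.

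First I would pass to the $\pm1$ Thue--Morse sequence $t(n)=(-1)^{e(n)}$, where $e(n)$ is the number of ones in the binary expansion of $n$, so that $\mathbf 1_{A_0}(k)=\tfrac12(1+t(k))$. Counting ordered representations and peeling off the diagonal term gives
$$R_2(A_0,n)=\frac18\Big((n+1)+2T(n)+C(n)\Big)-\frac12\,\mathbf 1_{A_0}(n/2)\,\mathbf 1_{2\mid n},$$
where $T(n)=\sum_{a=0}^{n}t(a)$ and $C(n)=\sum_{a=0}^{n}t(a)t(n-a)$. From $t(2k)=t(k)$, $t(2k+1)=-t(k)$ one gets $t(2k)+t(2k+1)=0$, hence $\sum_{a=0}^{2m-1}t(a)=0$ and $|T(n)|\le1$. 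Therefore
$$R_2(A_0,n)=\frac n8+\frac{C(n)}8+O(1),$$
and the theorem becomes the assertion that $|C(n)|=O(n^{1-\theta})$ holds for all $n\le x$ outside a set of size $O(x^{1-\delta})$.

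The engine is the recursion obtained from the same identities for $t$: separating $n=a+a'$ according to the parities of $a,a'$ yields $C(2m)=C(m)+C(m-1)$ and $C(2m+1)=-2C(m)$, with $C(0)=1$, $C(-1)=0$. Writing $v(m)=(C(m),C(m-1))^{\mathsf T}$, this says $v(2m)=M_0v(m)$, $v(2m+1)=M_1v(m)$ with
$$M_0=\begin{pmatrix}1&1\\0&-2\end{pmatrix},\qquad M_1=\begin{pmatrix}-2&0\\1&1\end{pmatrix},$$
so $C(n)$ is the first coordinate of $M_{b_0}M_{b_1}\cdots M_{b_{k-1}}v(1)$ for $n=\sum_{i\ge0}b_i2^i$. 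Both matrices have eigenvalues $\{1,-2\}$, so the all-ones case $n=2^k-1$ gives $|C(n)|\asymp n$ (this is exactly the source of the extreme values of $R_2$); the theorem says these large values are rare. To quantify rarity I would estimate an even moment $\Phi_{2s}(K)=\sum_{n<2^K}|C(n)|^{2s}$. Substituting the recursion expresses $\Phi_{2s}(K+1)$ through the sums over $m<2^K$ of all degree-$2s$ monomials in $(C(m),C(m-1))$, which evolve linearly under a $(2s+1)\times(2s+1)$ transfer matrix; for $s=1$ this is the $2\times2$ system $\Phi_2(K+1)=6\Phi_2(K)+2G(K)-4^K$, $G(K+1)=-4\Phi_2(K)-4G(K)+2\cdot4^K$ in the auxiliary sum $G(K)=\sum_{m<2^K}C(m)C(m-1)$, whose homogeneous growth rate is $1+\sqrt{17}$, matching the classical $L^4$-norm rate of Thue--Morse polynomials. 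Writing $\mu_{2s}$ for the dominant eigenvalue, $\Phi_{2s}(K)\ll\mu_{2s}^{K}$, and Markov's inequality bounds $\#\{n<2^K:|C(n)|>2^{K(1-\theta)}\}$ by $\mu_{2s}^K/2^{2sK(1-\theta)}$, which is $O(2^{K(1-\delta)})$ with $\delta>0$ precisely when $\mu_{2s}<2\cdot4^{s(1-\theta)}$; summing over the dyadic blocks $[2^k,2^{k+1})$ covering $[1,x]$ (the top block dominating) yields the exceptional-set bound $O(x^{1-\delta})$.

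The main obstacle is the quantitative control of the cancellation in $C(2m)=C(m)+C(m-1)$, that is, making $\mu_{2s}$ small enough to admit a positive $\theta$: the naive triangle-inequality bound $\mu_{2s}\le2^{2s+1}$ only returns $\theta=0$, so the cross terms must be retained. The cleanest way to expose a definite saving — and, I expect, the route to the stated threshold $\tfrac{2\log2-\log3}{40\log2-8\log3}=\tfrac{\log(4/3)}{8\log(32/3)}$ — is to process the binary digits three at a time: the eight length-three products $M_{r_0}M_{r_1}M_{r_2}$ have first rows whose $\ell^1$ norms sum to $40$, and I expect the factor $2^3=8$ together with the arithmetic of these norms to propagate into the denominator $40\log2-8\log3$. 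The remaining points, namely the boundary contributions from the index shift $m\mapsto m-1$ and the base case of the dyadic induction, are routine and can be absorbed into the implied constants.
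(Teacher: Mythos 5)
Your proposal is correct, but it proves the theorem by a genuinely different route from the paper. The paper works combinatorially: it classifies $n$ by the number $f(n)$ of base-$8$ digit blocks equal to $(1,0,1)$, shows that when $f(n)$ is large almost all representations $n=y+z$ land in a set $T(n)$ on which an explicit carry-free swap $(y,z)\mapsto(y\mp2^{L+1},z\pm2^{L+1})$ flips the parity of the total digit sum, whence $|T_{A_0,A_0}(n)|+|T_{B_0,B_0}(n)|=\frac n4+O(n^{1-\epsilon})$, and finally counts the exceptional $n$ (those with few $(1,0,1)$ blocks) by a binomial-coefficient estimate. You instead reduce everything to the Thue--Morse autocorrelation $C(n)=\sum_{a\le n}t(a)t(n-a)$ via $R_2(A_0,n)=\frac n8+\frac{C(n)}8+O(1)$, derive the recursion $C(2m)=C(m)+C(m-1)$, $C(2m+1)=-2C(m)$, and control the exceptional set by the second moment: your system $\Phi_2(K+1)=6\Phi_2(K)+2G(K)-4^K$, $G(K+1)=-4\Phi_2(K)-4G(K)+2\cdot4^K$ is correct (I checked it against $\Phi_2(1)=5$, $\Phi_2(2)=22$, $\Phi_2(3)=108$), so $\Phi_2(K)=O\bigl((1+\sqrt{17})^K\bigr)$ follows automatically from the linear recurrence, and Markov plus dyadic summation gives the exceptional-set bound $O(x^{1-\delta})$ for every $\theta<2-\frac{\log(1+\sqrt{17})}{2\log2}=0.3213\dots$. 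This is substantially stronger than the paper's threshold $0.0151\dots$, so your final paragraph --- where you worry that the triangle inequality only gives $\theta=0$ and speculate about processing three digits at a time to recover the paper's exact constant --- is moot: the fully explicit $s=1$ computation already covers the stated range of $\theta$, and the unproved assertions about higher moments $\mu_{2s}$ are not needed. What the paper's approach buys in exchange for its weaker exponent is a purely combinatorial argument whose bijection transfers directly to the general sets $A$ of Theorem 1.2 via S\'andor's structure theorem; your correlation identity is specific to $A_0$, though the paper's own Lemma 2.3 would let you transfer your stronger Lemma-1-type statement to general $A$ in the same way.
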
}

\begin{theorem}\label{thm1}
Let $A$ be a subset of $\mathbb{N}$ and $N$ be a positive integer such that $R_2(A,n)=R_2(\mathbb{N}\setminus A,n)$ for all $n\geq 2N-1$. Then for any $0<\theta<\frac{2\log2-\log3}{42\log 2-9\log3}=0.0149\dots$, there exists a $\delta=\delta(\theta)>0$ such that
$$|\{n:n\leq x,R_2(A,n)=\frac{n}{8}+O(n^{1-\theta})\}|=x+O(x^{1-\delta}),$$
where the implied constant depends only on $\theta$.
\end{theorem}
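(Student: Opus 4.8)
The plan is to reduce Theorem~\ref{thm1} to Theorem~\ref{thm0} by exploiting the structural characterization of $A$ given in Theorem~A. The point is that Theorem~A says that any set $A$ satisfying $R_2(A,n)=R_2(\mathbb{N}\setminus A,n)$ for all $n\geq 2N-1$ agrees with the Thue--Morse set $A_0$ from some point on: the recursion $2m\in A\Leftrightarrow m\in A$ and $2m+1\in A\Leftrightarrow m\notin A$ for $m\geq N$ is exactly the Thue--Morse recursion, so for $n\geq 2N$ membership in $A$ is governed by the parity of the binary digit sum, differing from $A_0$ only through the finitely many ``seed'' values in $[0,2N-1]$. Writing $A=(A\cap[0,2N-1])\cup A'$ where $A'$ is eventually the Thue--Morse set, the representation count $R_2(A,n)$ splits as $R_2(A_0,n)$ plus a correction coming from the bounded exceptional set. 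First I would make this splitting precise and show the correction term is small.

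The key computation is to bound $R_2(A,n)-R_2(A_0,n)$. Since $A$ and $A_0$ differ only inside the finite window $[0,2N-1]$, a representation $n=a+a'$ counted by one but not the other must use at least one summand in that window. The number of such representations is at most of order $N$ for each $n$ (one summand is forced to lie in a set of size $O(N)$, which determines the other). Thus $R_2(A,n)=R_2(A_0,n)+O(N)$ uniformly in $n$. I would then invoke Theorem~\ref{thm0}: for $0<\theta<\frac{2\log2-\log3}{40\log2-8\log3}$ there is a density-one set of $n\leq x$ on which $R_2(A_0,n)=\frac{n}{8}+O(n^{1-\theta})$, with at most $O(x^{1-\delta})$ exceptions. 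On this same set, since $N$ is a fixed constant, $R_2(A,n)=\frac{n}{8}+O(n^{1-\theta})+O(N)=\frac{n}{8}+O(n^{1-\theta})$, and the implied constant can be taken to depend only on $\theta$ after absorbing the $O(N)$ term. This already handles a range of $\theta$ slightly larger than the one claimed in Theorem~\ref{thm1}.

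The only subtle point is the discrepancy between the two exponent bounds $\frac{2\log2-\log3}{40\log2-8\log3}$ in Theorem~\ref{thm0} and the strictly smaller $\frac{2\log2-\log3}{42\log2-9\log3}$ in Theorem~\ref{thm1}. The naive reduction above would give the larger exponent for $A$ as well, so the loss must come from a more careful accounting of the exceptional contribution: the correction $R_2(A,n)-R_2(A_0,n)$ is not merely $O(N)$ on average but must be controlled digit-by-digit through the binary structure, and the extra factors $42$ and $9$ in place of $40$ and $8$ suggest that the self-similar recursion used to estimate the second moment of $R_2(A_0,n)$ acquires two additional ``branches'' when the seed set is present. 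The hard part will therefore be to rerun the moment/large-deviation estimate that underlies Theorem~\ref{thm0} with the finite perturbation included, tracking how the exceptional window inflates the variance and hence degrades the exponent from $40\log2-8\log3$ to $42\log2-9\log3$; I expect this is where the real work lies, rather than in the elementary splitting argument.
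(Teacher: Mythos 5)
Your reduction rests on a false reading of Theorem~A. The characterization does \emph{not} say that $A$ agrees with $A_0$ outside the finite window $[0,2N-1]$. The recursion $2m\in A\Leftrightarrow m\in A$, $2m+1\in A\Leftrightarrow m\notin A$ (for $m\geq N$) propagates the \emph{arbitrary} seed values on $[N,2N-1]$ upward: by the Chen--Tang lemma (Lemma~2 of the paper), for $n=2^im+k$ with $N\leq m<2N$ and $0\leq k<2^i$ one has $n\in A$ iff ($m\in A$ and $k\in A_0$) or ($m\notin A$ and $k\in B_0$). So on each dyadic block $[2^im,2^i(m+1))$ the set $A$ looks like a translate of either $A_0\cap[0,2^i)$ or $B_0\cap[0,2^i)$ depending on the seed, and in general $A$ differs from $A_0$ on a positive-density set. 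The simplest counterexample is $A=B_0$ itself, which satisfies the hypothesis with $N=1$ and is the complement of $A_0$; more generally any admissible seed on $[0,2N-1]$ other than $A_0$'s produces a set differing from $A_0$ infinitely often. Consequently your key estimate $R_2(A,n)=R_2(A_0,n)+O(N)$ has no justification: representations $n=a+a'$ can change status with both summands large, not only when one summand lies in $[0,2N-1]$.

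Because of this, the actual work of the theorem is exactly the part you deferred. The paper's Lemma~3 cuts $[0,x]$ into blocks of length $2^k$ with $k=\lfloor\log_2 x/(1+\epsilon)\rfloor$, writes $A\cap[0,x]$ as a union of blocks $j2^k+C_j$ with $C_j\in\{A_0\cap[0,2^k),\,B_0\cap[0,2^k)\}$, and evaluates $R_2(A,n)$ block-pair by block-pair using the fact that $R_{A_0,A_0}(r)$, $R_{B_0,B_0}(r)$, $R_{A_0,B_0}(r)$, $R_{B_0,A_0}(r)$ are all $\frac r4+O(2^{k(1-\epsilon)})$ for typical $r<2^{k+1}$ (this is where Theorem~\ref{thm0}, via Lemma~1, enters). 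The choice of $k$ balances the in-block error $2^{k(1-\epsilon)}$ against the number of blocks and the $O(2^k)$ boundary terms, and is precisely what degrades the exponent via $\theta=\epsilon/(1+\epsilon)$, turning $40\log2-8\log3$ into $42\log2-9\log3$; it is not a variance inflation in the large-deviation count, as you speculate. Your proposal therefore both starts from a false structural premise and leaves the essential block-decomposition argument unproved.
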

By Theorem \ref{thm1}, we immediately have
\begin{corollary}
Let $A$ be a subset of $\mathbb{N}$ and $N$ be a positive integer such that $R_2(A,n)=R_2(\mathbb{N}\setminus A,n)$ for all $n\geq 2N-1$. Then for any $0<\theta<\frac{2\log2-\log3}{42\log 2-9\log3}=0.0149\dots$, the set of integers $n$ with
$$R_2(A,n)=\frac{n}{8}+O(n^{1-\theta})$$
has density one.
\end{corollary}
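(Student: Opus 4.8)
The plan is to obtain the corollary as an immediate quantitative consequence of Theorem \ref{thm1}, whose counting estimate already contains all the arithmetic content. First I would fix $\theta$ in the stated range and invoke Theorem \ref{thm1} to produce a $\delta=\delta(\theta)>0$ and an implied constant $C=C(\theta)$ with the property that, setting
$$S(x)=\Big\{\,n\le x:\ \big|R_2(A,n)-\tfrac{n}{8}\big|\le C\,n^{1-\theta}\,\Big\},$$
one has $|S(x)|=x+O(x^{1-\delta})$, the implied constant depending only on $\theta$.

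Next I would identify $S(x)$ with the relevant slice of the set appearing in the corollary. Writing $\mathcal{G}$ for the set of all $n$ satisfying $R_2(A,n)=\frac{n}{8}+O(n^{1-\theta})$, note that $\mathcal{G}$ is precisely the set of $n$ admitting \emph{some} admissible implied constant, and $C$ is one such constant; hence $S(x)\subseteq \mathcal{G}\cap[0,x]$ for every $x$. This gives the lower bound $|\mathcal{G}\cap[0,x]|\ge|S(x)|=x+O(x^{1-\delta})$, while the trivial upper bound $|\mathcal{G}\cap[0,x]|\le\lfloor x\rfloor+1=x+O(1)$ holds because there are only $x+O(1)$ nonnegative integers below $x$.

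Combining the two bounds yields $|\mathcal{G}\cap[0,x]|=x+O(x^{1-\delta})$, so that
$$\frac{|\mathcal{G}\cap[0,x]|}{x}=1+O(x^{-\delta})\longrightarrow 1\qquad(x\to\infty).$$
Equivalently, the exceptional set $\mathbb{N}\setminus\mathcal{G}$ has at most $O(x^{1-\delta})$ elements below $x$, hence density zero, and therefore $\mathcal{G}$ has density one, as claimed.

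I do not expect a genuine obstacle here, since Theorem \ref{thm1} is doing all the work. The only point requiring a line of care is the passage from the power-saving error term $O(x^{1-\delta})$ for the count of exceptional $n$ to the conclusion that this exceptional set is $o(x)$, and hence of density zero; this is immediate because $\delta>0$ forces $x^{-\delta}\to 0$. Thus the corollary follows directly, with the same admissible range of $\theta$ inherited verbatim from Theorem \ref{thm1}.
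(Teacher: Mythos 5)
Your proposal is correct and coincides with the paper's own treatment: the paper derives the corollary immediately from Theorem \ref{thm1} by exactly the observation you make, namely that the exceptional set has cardinality $O(x^{1-\delta})=o(x)$ below $x$, hence density zero. No further comment is needed.
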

\begin{remark}
If $A\subseteq\mathbb{N}$ and $N$ is a positive integer such that $R_3(A,n)=R_3(\mathbb{N}\setminus A,n)$ for all $n\geq 2N-1$, then applying the same method, we can get that for any $0<\theta<\frac{2\log2-\log3}{42\log 2-9\log3}=0.0149\dots$, the set of integers $n$ with
$R_3(A,n)=\frac{n}{8}+O(n^{1-\theta})$
has density one.
\end{remark}
Motivated by $R_2(A_0,2^{2l+1}-1)=0$, we pose two problems for further research.
\begin{problem}
Let $A$ be a subset of $\mathbb{N}$ and $N$ be a positive integer such that $R_2(A,n)=R_2(\mathbb{N}\setminus A,n)$ for all $n\geq 2N-1$. Does there exist a sequence $n_1,n_2,\ldots$ such that
$$\lim_{k\rightarrow\infty}\frac{R_2(A,n_k)}{n_k}\neq\frac 1 8?$$
\end{problem}
\begin{problem}
Let $A$ be a subset of $\mathbb{N}$ and $N$ be a positive integer such that $R_3(A,n)=R_3(\mathbb{N}\setminus A,n)$ for all $n\geq 2N-1$. Does there exist a sequence $n_1,n_2,\ldots$ such that
$$\lim_{k\rightarrow\infty}\frac{R_3(A,n_k)}{n_k}\neq\frac 1 8?$$
\end{problem}

In this paper, we define
$$P_t=\{z:(\varepsilon_{3t}^{(z)},\varepsilon_{3t+1}^{(z)},\varepsilon_{3t+2}^{(z)})=(0,0,1)~\text{or}~(0,1,0)\},$$
and
$$Q_t=\left\{(y,z):\sum_{i=3t+3}^{\infty}\varepsilon_i^{(y)}2^i<\sum_{i=3t+3}^{\infty}\varepsilon_i^{(z)}2^i\right\},$$
where
$$y=\sum_{i=0}^{\infty}\varepsilon_i^{(y)}2^i,~~z=\sum_{i=0}^{\infty}\varepsilon_i^{(z)}2^i,~~\varepsilon_i^{(y)},\varepsilon_i^{(z)}\in\{0,1\}.$$

\section{Proofs}
In this section, we give the proof of our main result. Firstly, we prove some lemmas.

\begin{lemma}\label{lem1}
Let $0<\epsilon<\frac{2\log2-\log3}{40\log 2-8\log3}$. Then there exists a $\delta'=\delta'(\epsilon)$ with $0<\delta'<1$ such that
$$|\{n:\frac{x}{10}<n\leq x,R_2(A_0,n)=\frac{n}{8}+O(n^{1-\epsilon})\}|=\frac{9x}{10}+O(x^{1-\delta'}),$$
where the implied constant depends only on $\epsilon$.
\end{lemma}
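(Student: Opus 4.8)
The plan is to reduce the statement to a bound on the additive autocorrelation of the Thue--Morse sequence and then to control that correlation through its $2$-adic self-similarity. Write $\chi(m)=1$ if $m\in A_0$ and $\chi(m)=-1$ if $m\in B_0$, so that $\chi(2m)=\chi(m)$, $\chi(2m+1)=-\chi(m)$ and $\mathbf 1_{A_0}(m)=\tfrac12(1+\chi(m))$. Putting $a'=n-a$ and using $a<a'\Leftrightarrow a<n/2$, I would expand
\begin{equation*}
R_2(A_0,n)=\frac14\sum_{0\le a<n/2}\bigl(1+\chi(a)\bigr)\bigl(1+\chi(n-a)\bigr)=\frac n8+\frac14\Bigl(\sum_{0\le a<n/2}\chi(a)+\sum_{0\le a<n/2}\chi(n-a)\Bigr)+\frac14\sum_{0\le a<n/2}\chi(a)\chi(n-a)+O(1).
\end{equation*}
The two linear sums are partial sums of the Thue--Morse sequence, hence are $O\!\bigl(n^{\log3/\log4}\bigr)$ by the classical summatory bound; since $\log 3/\log 4=0.792\dots<1-\epsilon$ for every admissible $\epsilon$, they are absorbed into the allowed error $O(n^{1-\epsilon})$. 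Writing $\widetilde C(n)=\sum_{a=0}^{n}\chi(a)\chi(n-a)$ one has $\sum_{0\le a<n/2}\chi(a)\chi(n-a)=\tfrac12\widetilde C(n)+O(1)$, so that $R_2(A_0,n)=\frac n8+\frac18\widetilde C(n)+O\!\bigl(n^{\log3/\log4}\bigr)$. Thus the lemma is equivalent to the assertion that $|\widetilde C(n)|=O(n^{1-\epsilon})$ holds for all but $O(x^{1-\delta'})$ of the integers $n\in(x/10,x]$.

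Next I would extract the self-similar recursion for $\widetilde C$. Splitting the summation index into even and odd parts and using $\chi(2m)=\chi(m)$, $\chi(2m+1)=-\chi(m)$ gives
\begin{equation*}
\widetilde C(2k)=\widetilde C(k)+\widetilde C(k-1),\qquad \widetilde C(2k+1)=-2\,\widetilde C(k).
\end{equation*}
Hence the pair $w_k=(\widetilde C(k),\widetilde C(k-1))$ evolves linearly: $w_{2k}$ and $w_{2k+1}$ are obtained from $w_k$ by one of two fixed $2\times2$ integer matrices, both of determinant $-2$. Reading the binary digits of $n$ from the bottom and iterating three steps at a time, the passage from $w_{\lfloor n/8\rfloor}$ to $w_n$ is governed by one of eight linear maps, indexed by the block $(\varepsilon_{3t}^{(n)},\varepsilon_{3t+1}^{(n)},\varepsilon_{3t+2}^{(n)})$. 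The choice $n=2^{m}-1$ (all digits $1$) yields $|\widetilde C(n)|\asymp n$, so no bound better than trivial can hold for every $n$; the whole point is that this extreme growth forces very special digits.

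The heart of the argument is the quantitative block estimate. I would show that the two blocks recorded by $P_t$, namely $(0,0,1)$ and $(0,1,0)$, are exactly the patterns whose associated length-three map is contracting relative to the trivial growth $2^3$, so that each such good block lowers the exponent by a fixed amount. This yields a bound of the shape $|\widetilde C(n)|\ll n\,2^{-\kappa\,g(n)}$, where $g(n)$ counts the good blocks among the roughly $\tfrac13\log_2 n$ blocks of $n$ and $\kappa>0$ is explicit; the comparison set $Q_t$ serves to pair the remaining $(y,z)$ so that their contributions cancel in the summatory estimate, which is where the Thue--Morse exponent $\log3/\log4$ re-enters. Consequently $|\widetilde C(n)|=O(n^{1-\epsilon})$ as soon as $g(n)\ge \tfrac{\epsilon}{\kappa}\log_2 n$. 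Restricting to $n\in(x/10,x]$ keeps the number of blocks within $O(1)$ of $\tfrac13\log_2 x$, so the threshold fraction of good blocks is essentially $3\epsilon/\kappa$; for $\epsilon$ in the stated range this is strictly below the natural density $\tfrac14$ of good patterns, and a Chernoff-type large-deviation count of digit strings bounds the exceptional set by $O(x^{1-\delta'})$ with $\delta'=\delta'(\epsilon)>0$. The length of the window accounts for the main term $\tfrac{9x}{10}$.

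The main obstacle is the block estimate itself. Because a single good block's gain depends on the direction of the incoming vector $w_k$ and not merely on its size, one cannot simply multiply scalar factors: one must track the two-dimensional state across consecutive blocks and show the gain survives this coupling. It is precisely this two-dimensional bookkeeping---together with the carry count encoded by the determinant-$(-2)$ matrices and the Thue--Morse summatory exponent---that fixes the explicit value of $\kappa$ and hence pins down the admissible range $0<\epsilon<\frac{2\log2-\log3}{40\log2-8\log3}$. Secondary technical points are the treatment of the leading digits and the boundary states of the recursion, and verifying that the large-deviation rate stays positive throughout, so that a genuine $\delta'=\delta'(\epsilon)>0$ is produced uniformly on the admissible range.
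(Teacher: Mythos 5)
Your reduction of the lemma to the autocorrelation $\widetilde C(n)=\sum_{a}\chi(a)\chi(n-a)$ is sound (in fact the linear sums are even better than you claim: the partial sums of the $\pm1$ Thue--Morse sequence are bounded by $1$, since $\chi(2m)+\chi(2m+1)=0$, so no $n^{\log3/\log4}$ bound is needed there), and the recursion $\widetilde C(2k)=\widetilde C(k)+\widetilde C(k-1)$, $\widetilde C(2k+1)=-2\widetilde C(k)$ is correct. The large-deviation endgame also matches the paper, which bounds the exceptional set by a binomial count over digit blocks via Stirling. However, the argument has a genuine gap exactly where you flag it: the ``quantitative block estimate'' is asserted, not proved. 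Both transfer matrices $M_0=\bigl(\begin{smallmatrix}1&1\\0&-2\end{smallmatrix}\bigr)$ and $M_1=\bigl(\begin{smallmatrix}-2&0\\1&1\end{smallmatrix}\bigr)$ have spectral radius $2$, equal to the trivial per-digit growth, so whether a given $3$-digit block contracts depends on the alignment of the incoming state $w_k$ with the expanding eigendirections; for an unfavourable incoming direction a single designated block need not produce any saving. Establishing a uniform multiplicative gain per good block, with an explicit constant, is the entire content of the lemma, and without it the threshold $\frac{2\log2-\log3}{40\log2-8\log3}$ is unsupported.

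There is also a concrete misidentification: you take the ``good'' blocks to be $(0,0,1)$ and $(0,1,0)$, lifted from the definition of $P_t$. But in the paper $P_t$ constrains the digits of the \emph{summand} $z$, not of $n$; the quantity that drives the lemma is $f(n)$, the number of blocks $(\varepsilon_{3k}^{(n)},\varepsilon_{3k+1}^{(n)},\varepsilon_{3k+2}^{(n)})=(1,0,1)$ in $n$ itself, and $Q_t$ is not a cancellation device but part of selecting, for each representation $n=y+z$, a canonical position $L$ at which the digit $2^{L+1}$ can be moved between $y$ and $z$. The paper avoids the transfer-matrix difficulty altogether: it builds a set $T(n)$ of representations carrying such a switchable position, shows $|T(n)|=\frac n2+O(n^{1-\epsilon})$ when $f(n)$ is large, and uses the switch as a parity-reversing involution between $T_{A_0,A_0}\cup T_{B_0,B_0}$ and $T_{A_0,B_0}\cup T_{B_0,A_0}$, which yields $R_2(A_0,n)=\frac n8+O(n^{1-\epsilon})$ directly. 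If you wish to pursue your route, the missing step is a proof that the product of the three matrices attached to the block $(1,0,1)$ (applied after arbitrary preceding blocks) shrinks the state by a fixed factor relative to $8$, uniformly in the incoming direction; as it stands this is the heart of the lemma and it is not established.
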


\begin{proof}
For $\frac{x}{10}<n\leq x$, let
$$n=\sum_{i=0}^{\lfloor\log_2x\rfloor}\varepsilon_i^{(n)}2^i,~~\varepsilon_i^{(n)}\in\{0,1\}$$
be the binary representation of $n$, and let
$$f(n)=|\{k:(\varepsilon_{3k}^{(n)},\varepsilon_{3k+1}^{(n)},\varepsilon_{3k+2}^{(n)})=(1,0,1)\}|.$$
For $0<c<1$, we are going to show that $f(n)>\frac{c\log_2x}{24}$ implies $$R_2(A_0,n)=\frac n 8+O(n^{1-\frac{c(2\log2-\log3)}{40\log2-8\log3}}).$$
Suppose that $f(n)>\frac{c\log_2x}{24}$. Let $(\varepsilon_{3k_j}^{(n)},\varepsilon_{3k_j+1}^{(n)},\varepsilon_{3k_j+2}^{(n)})=(1,0,1)$ for $1\leq j\leq f(n)$, and let
\begin{eqnarray*}
S(n)&=&\{(y,z):n=y+z,0\leq y<z,z\in P_{k_t}~\text{but}~z\notin \cup_{j=1}^{t-1}P_{k_j},\\
&&(y,z)\in Q_{k_t}~\text{for~some}~t\in[1,f(n)]\}.
\end{eqnarray*}
Then
\begin{equation}\label{eq1.1}
|S(n)|=|S_1|-|S_2|-|S_3|,
\end{equation}
where
\begin{eqnarray*}
S_1&=&\{(y,z):n=y+z,0\leq y<z\},\\
S_2&=&\{(y,z)\in S_1:z\notin P_{k_t}~\text{for}~1\leq t\leq f(n)\},\\
S_3&=&\{(y,z)\in S_1:z\in P_{k_t}~\text{but}~z\notin \cup_{j=1}^{t-1}P_{k_j},(y,z)\notin Q_{k_t}~\text{for~some}~t\in[1,f(n)]\}.
\end{eqnarray*}
Clearly,
\begin{equation}\label{eq1.2}|
S_1|=\lfloor\frac{n+1}{2}\rfloor
\end{equation}
and
\begin{eqnarray}\label{eq1.3}
|S_2|=O(6^{f(n)}2^{\lfloor\log_2x\rfloor-3f(n)})=O\left(x\left(\frac{3}{4}\right)^{f(n)}\right)=O(x^{1-\frac{c(2\log2-\log3)}{24\log2}}).
\end{eqnarray}
We have
\begin{eqnarray*}
|S_3|=\sum_{t=1}^{f(n)}|\{(y,z)\in S_1:z\in P_{k_t}~\text{but}~z\notin \cup_{j=1}^{t-1}P_{k_j},~(y,z)\notin Q_{k_t}\}|.
\end{eqnarray*}
It follows from $(y,z)\in S_1$ and $(y,z)\notin P_{k_t}$
that
$$\sum_{i=3k_t+3}^{\lfloor \log_2x\rfloor}\varepsilon_{i}^{(y)}2^i=\sum_{i=3k_t+3}^{\lfloor \log_2x\rfloor}\varepsilon_{i}^{(z)}2^i=\lfloor\frac{n}{2^{3k_t+4}}\rfloor2^{3k_t+3},$$
therefore the digits $\varepsilon_{3k_t+3}^{(z)},\varepsilon_{3k_t+4}^{(z)},\ldots$ are determined. Since
$$3k_t\leq \log_2x-3(f(n)-t)\leq \log_2x-\frac{c\log_2x}{8}+3t,$$
it follows that
\begin{eqnarray*}
|\{(y,z)\in S_1:z\in P_{k_t}~\text{but}~z\notin\cup_{j=1}^{t-1}P_{k_j},~(y,z)\notin Q_{k_t}\}|=O(2^{\log_2x-\frac{c\log_2x}{8}+3t})=O(x^{1-\frac{c}{8}}2^{3t}),
\end{eqnarray*}
and so
\begin{eqnarray*}
&&\sum_{t\leq\frac{c\log2\log_2x}{40\log2-8\log3}}|\{(y,z)\in S_1:z\in P_{k_t}~\text{but}~z\notin\cup_{j=1}^{t-1}P_{k_j},~(y,z)\notin Q_{k_t}\}|\\
&=&\sum_{t\leq\frac{c\log2\log_2x}{40\log2-8\log3}}O(x^{1-\frac{c}{8}}2^{3t})=O(x^{1-\frac{c(2\log2-\log3)}{40\log2-8\log3}}).
\end{eqnarray*}
On the other hand,
\begin{eqnarray*}
&&|\{(y,z)\in S_1:z\in P_{k_t}~\text{but}~z\notin\cup_{j=1}^{t-1}P_{k_j},~(y,z)\notin Q_{k_t}\}|
\leq |\{(y,z)\in S_1:z\notin\cup_{j=1}^{t-1}P_{k_j}\}|\\
&=&O(6^{t-1}2^{\log_2x-3(t-1)})=O\left(x\left(\frac{3}{4}\right)^t\right),
\end{eqnarray*}
hence
\begin{eqnarray*}
&&\sum_{t\geq\frac{c\log2\log_2x}{40\log2-8\log3}}|\{(y,z)\in S_1:z\in P_{k_t}~\text{but}~z\notin\cup_{j=1}^{t-1}P_{k_j},~(y,z)\notin Q_{k_t}\}|\\
&&=\sum_{t\geq\frac{c\log2\log_2x}{40\log2-8\log3}}O\left(x\left(\frac{3}{4}\right)^t\right)=O(x^{1-\frac{c(2\log2-\log3)}{40\log2-8\log3}}).
\end{eqnarray*}
Therefore,
\begin{equation}\label{eq1.4}
|S_3|=O(x^{1-\frac{c(2\log2-\log3)}{40\log2-8\log3}}).
\end{equation}
By (\ref{eq1.1}), (\ref{eq1.2}), (\ref{eq1.3}), (\ref{eq1.4}), we have
$$|S(n)|=\frac{n}{2}+O(x^{1-\frac{c(2\log2-\log3)}{40\log2-8\log3}})=\frac{n}{2}+O(n^{1-\frac{c(2\log2-\log3)}{40\log2-8\log3}}).$$

Let
\begin{eqnarray*}
T(n)&=&\{(y,z):~n=y+z,~0\leq y<z,~\exists~L~s.t.~\sum_{i=L+3}^{\lfloor\log_2x\rfloor}\varepsilon_i^{(y)}2^i<\sum_{i=L+3}^{\lfloor\log_2x\rfloor}\varepsilon_i^{(z)}2^i,\\
&&(\varepsilon_{L+1}^{(y)},\varepsilon_{L+2}^{(y)},\varepsilon_{L}^{(z)},\varepsilon_{L+1}^{(z)},\varepsilon_{L+2}^{(z)})
=(0,0,0,0,1)~\text{or}~(1,0,0,1,0),\\
&&(\varepsilon_{i+1}^{(y)},\varepsilon_{i+2}^{(y)},\varepsilon_{i}^{(z)},\varepsilon_{i+1}^{(z)},\varepsilon_{i+2}^{(z)})
\neq(0,0,0,0,1)~\text{and}~(1,0,0,1,0)~\text{for}~i<L\}.
\end{eqnarray*}
We will prove that $S(n)\subseteq T(n)$.

Let $(y,z)\in S(n)$ and $(\varepsilon_{3k_t}^{(z)},\varepsilon_{3k_t+1}^{(z)},\varepsilon_{3k_t+2}^{(z)})=(0,1,0)$, then
$$\sum_{i=0}^{3k_t}\varepsilon_i^{(y)}2^i+\sum_{i=0}^{3k_t}\varepsilon_i^{(z)}2^i=\sum_{i=0}^{3k_t}\varepsilon_i^{(n)}2^i,$$
since $\sum_{i=0}^{3k_t}\varepsilon_i^{(y)}2^i+\sum_{i=0}^{3k_t}\varepsilon_i^{(z)}2^i<2^{3k_t}+2^{3k_t+1}$ and $\varepsilon_{3k_t}^{(n)}=1$. It follows from $(\varepsilon_{3k_t}^{(n)},\varepsilon_{3k_t+1}^{(n)},\varepsilon_{3k_t+2}^{(n)})=(1,0,1)$ that $\varepsilon_{3k_t+1}^{(y)}=1,~\varepsilon_{3k_t+2}^{(y)}=0$.
Hence, $$(\varepsilon_{3k_t+1}^{(y)},\varepsilon_{3k_t+2}^{(y)},\varepsilon_{3k_t}^{(z)},\varepsilon_{3k_t+1}^{(z)},\varepsilon_{3k_t+1}^{(z)})=(1,0,0,1,0),$$
and so there exists some $L\leq 3k_t$ satisfying
$$(\varepsilon_{L+1}^{(y)},\varepsilon_{L+2}^{(y)},\varepsilon_{L}^{(z)},\varepsilon_{L+1}^{(z)},\varepsilon_{L+2}^{(z)})
=(0,0,0,0,1) ~\text{or} ~(1,0,0,1,0),$$ but $$(\varepsilon_{i+1}^{(y)},\varepsilon_{i+2}^{(y)},\varepsilon_{i}^{(z)},\varepsilon_{i+1}^{(z)},\varepsilon_{i+2}^{(z)})
\neq(0,0,0,0,1)~\text{and}~(1,0,0,1,0)~\text{for}~i<L.$$ Since $\sum_{i=3k_t+3}^{\lfloor\log_2x\rfloor}\varepsilon_i^{(y)}2^i<\sum_{i=3k_t+3}^{\lfloor\log_2x\rfloor}\varepsilon_i^{(z)}2^i$, we have $\sum_{i=L+3}^{\lfloor\log_2x\rfloor}\varepsilon_i^{(y)}2^i<\sum_{i=L+3}^{\lfloor\log_2x\rfloor}\varepsilon_i^{(z)}2^i$.
Therefore, $(y,z)\in T(n)$.

Similarly, if $(y,z)\in S(n)$ and $(\varepsilon_{3k_t}^{(z)},\varepsilon_{3k_t+1}^{(z)},\varepsilon_{3k_t+2}^{(z)})=(0,0,1)$, then we obtain $\varepsilon_{3k_t+1}^{(y)}=0,~\varepsilon_{3k_t+2}^{(y)}=0$. By the similar argument as above, we have $(y,z)\in T(n)$. Therefore $S(n)\subseteq T(n)$, and so
\begin{equation}\label{eq1.5}
|T(n)|=\frac{n}{2}+O(n^{1-\frac{c(2\log2-\log3)}{40\log2-8\log3}}).
\end{equation}
Now, we prove that
$$R_2(A_0,n)=\frac{n}{8}+O(n^{1-\frac{c(2\log2-\log3)}{40\log2-8\log3}}).$$

For $D_1,D_2\subseteq \mathbb{N}$, let
$$T_{D_1,D_2}(n)=\{(y,z):(y,z)\in T(n),y\in D_1,z\in D_2\}.$$
Then
\begin{eqnarray}\label{eq1.6}
T(n)=T_{A_0,A_0}(n)\cup T_{B_0,B_0}(n)\cup T_{A_0,B_0}(n)\cup T_{B_0,A_0}(n).
\end{eqnarray}

We are going to define a bijection between $T_{A_0,A_0}(n)\cup T_{B_0,B_0}(n)$ and $T_{A_0,B_0}(n)\cup T_{B_0,A_0}(n)$. Let $(y,z)\in T(n)$. Let $(y',z')=(y-2^{L+1},z+2^{L+1})$ if $(\varepsilon_{L+1}^{(y)},\varepsilon_{L+2}^{(y)},\varepsilon_{L}^{(z)},\varepsilon_{L+1}^{(z)},\varepsilon_{L+2}^{(z)})=(1,0,0,1,0)$. It is easy to see that $(\varepsilon_{L+1}^{(y')},\varepsilon_{L+2}^{(y')},\varepsilon_{L}^{(z')},\varepsilon_{L+1}^{(z')},\varepsilon_{L+2}^{(z')})=(0,0,0,0,1)$, $\sum_{i=0}^{\infty}\varepsilon_i^{(y)}+\sum_{i=0}^{\infty}\varepsilon_i^{(z)}-(\sum_{i=0}^{\infty}\varepsilon_i^{(y')}+\sum_{i=0}^{\infty}\varepsilon_i^{(z')})=1$ and $(\varepsilon_{i+1}^{(y')},\varepsilon_{i+2}^{(y')},\varepsilon_{i}^{(z')},\varepsilon_{i+1}^{(z')},\varepsilon_{i+2}^{(z')})\ne (1,0,0,1,0)$ and $(0,0,0,0,1)$ for $i<L$.

Let $(y',z')=(y+2^{L+1},z-2^{L+1})$ if $(\varepsilon_{L+1}^{(y)},\varepsilon_{L+2}^{(y)},\varepsilon_{L}^{(z)},\varepsilon_{L+1}^{(z)},\varepsilon_{L+2}^{(z)})=(0,0,0,0,1)$. It is easy to see that $(\varepsilon_{L+1}^{(y')},\varepsilon_{L+2}^{(y')},\varepsilon_{L}^{(z')},\varepsilon_{L+1}^{(z')},\varepsilon_{L+2}^{(z')})=(1,0,0,1,0)$, $\sum_{i=0}^{\infty}\varepsilon_i^{(y)}+\sum_{i=0}^{\infty}\varepsilon_i^{(z)}-(\sum_{i=0}^{\infty}\varepsilon_i^{(y')}+\sum_{i=0}^{\infty}\varepsilon_i^{(z')})=-1$ and $(\varepsilon_{i+1}^{(y')},\varepsilon_{i+2}^{(y')},\varepsilon_{i}^{(z')},\varepsilon_{i+1}^{(z')},\varepsilon_{i+2}^{(z')})\ne (1,0,0,1,0)$ and $(0,0,0,0,1)$ for $i<L$.

Clearly, $\{ (y,z): (y,z)\in T(n), \sum_{i=0}^{\infty}\varepsilon_i^{(y)}+\sum_{i=0}^{\infty}\varepsilon_i^{(z)}\mbox{ is even} \}=T_{A_0,A_0}(n)\cup T_{B_0,B_0}(n)$ and $\{ (y,z): (y,z)\in T(n), \sum_{i=0}^{\infty}\varepsilon_i^{(y)}+\sum_{i=0}^{\infty}\varepsilon_i^{(z)}\mbox{ is odd} \}=T_{A_0,B_0}(n)\cup T_{B_0,A_0}(n)$.

These facts imply that $(y,z)\rightarrow(y',z')$ defined as above is a bijection between $T_{A_0,A_0}(n)\cup T_{B_0,B_0}(n)$ and $T_{A_0,B_0}(n)\cup T_{B_0,A_0}(n)$.

By (\ref{eq1.5}) and (\ref{eq1.6}), we have
$$|T_{A_0,A_0}(n)|+|T_{B_0,B_0}(n)|=\frac{n}{4}+O(n^{1-\frac{c(2\log2-\log3)}{40\log2-8\log3}}).$$
Clearly,
$$|T_{A_0,A_0}(n)|=R_2(A_0,n)+O(n^{1-\frac{c(2\log2-\log3)}{40\log2-8\log3}}),$$
and
$$|T_{B_0,B_0}(n)|=R_2(B_0,n)+O(n^{1-\frac{c(2\log2-\log3)}{40\log2-8\log3}}).$$
Since $R_2(A_0,n)=R_2(B_0,n)$, we have
$$R_2(A_0,n)=\frac{n}{8}+O(n^{1-\frac{c(2\log2-\log3)}{40\log2-8\log3}}).$$
Let $\epsilon=\frac{c(2\log2-\log3)}{40\log2-8\log3}$. Then $0<\epsilon<\frac{2\log2-\log3}{40\log2-8\log3}$. By Stirling's approximation, there is a $\delta'=\delta'(\epsilon)$ with $0<\delta'<1$ such that
\begin{eqnarray*}
&&|\{n:n\leq x,R_2(A_0,n)\neq\frac{n}{8}+O(n^{1-\epsilon})\}|\\
&\leq&|\{n:n\leq x,f(n)\leq\frac{c\log_2x}{24}\}|\\
&\leq&\sum_{i=0}^{\lfloor \frac{c\log_2x}{24}\rfloor}
\left(\begin{array}{c}
\lfloor\frac{\log_2x}{3}\rfloor+1 \\
i
\end{array}
\right)7^{\lfloor\frac{\log_2x+1}{3}\rfloor-i}\\
&\leq&\sum_{i=0}^{\lfloor \frac{c\log_2x}{24}\rfloor}
\left(\begin{array}{c}
\lfloor\frac{\log_2x}{3}\rfloor+1 \\
\lfloor \frac{c\log_2x}{24}\rfloor
\end{array}
\right)7^{\lfloor\frac{\log_2x+1}{3}\rfloor-\lfloor \frac{c\log_2x}{24}\rfloor}=O(x^{1-\delta'}).
\end{eqnarray*}
Therefore,
$$|\{n:\frac{x}{10}<n\leq x,R_2(A_0,n)=\frac{n}{8}+O(n^{1-\epsilon})\}|=\frac{9x}{10}+O(x^{1-\delta'}).$$
This completes the proof.
\end{proof}

\begin{lemma}\cite[Lemma 1]{CT}\label{lem2}
Let $A$ be a subset of $\mathbb{N}$ and $N$ be a positive integer such that $R_2(A,n)=R_2(\mathbb{N}\setminus A,n)$ for all $n\geq 2N-1$, and let $m,k,i$ be integers with $m\geq N$, $i\geq 0$ and $0\leq k<2^i$. Then

(a) if $k\in A_0$, then $m\in A\Leftrightarrow2^im+k\in A$,

(b) if $k\in B_0$, then $m\in A\Leftrightarrow2^im+k\notin A$.
\end{lemma}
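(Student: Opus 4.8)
The plan is to prove (a) and (b) together by induction on $i$, using the recursive description of $A$ supplied by Theorem A. Since the hypothesis of the lemma, namely $R_2(A,n)=R_2(\mathbb{N}\setminus A,n)$ for all $n\geq 2N-1$, is precisely the hypothesis of Theorem A, I would first invoke Theorem A to obtain that for every integer $j\geq N$,
\begin{equation*}
2j\in A\Leftrightarrow j\in A,\qquad 2j+1\in A\Leftrightarrow j\notin A.
\end{equation*}
The base case $i=0$ forces $k=0$, which lies in $A_0$, so statement (a) degenerates to the tautology $m\in A\Leftrightarrow m\in A$ and holds trivially.

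For the inductive step I would fix $i\geq 1$ and assume both (a) and (b) hold with $i$ replaced by $i-1$, for every $m\geq N$ and every $k'$ with $0\leq k'<2^{i-1}$. Given $k$ with $0\leq k<2^i$, I would split off its lowest bit by writing $k=2k'+\epsilon$ with $\epsilon\in\{0,1\}$ and $0\leq k'<2^{i-1}$, so that
\begin{equation*}
2^im+k=2\bigl(2^{i-1}m+k'\bigr)+\epsilon=2j+\epsilon,\qquad j:=2^{i-1}m+k'.
\end{equation*}
The first thing to record is that $j\geq 2^{i-1}m\geq m\geq N$, so the doubling rules above legitimately apply to $j$: when $\epsilon=0$ they give $2^im+k\in A\Leftrightarrow j\in A$, and when $\epsilon=1$ they give $2^im+k\in A\Leftrightarrow j\notin A$.

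The second thing to record concerns binary digit sums. Writing $s(\cdot)$ for the number of $1$'s in the binary expansion, I have $s(k)=s(k')$ when $\epsilon=0$ and $s(k)=s(k')+1$ when $\epsilon=1$; hence $k$ and $k'$ belong to the same one of $A_0,B_0$ precisely when $\epsilon=0$, and to opposite ones precisely when $\epsilon=1$. I would then combine these two observations with the inductive hypothesis applied to $k'$, which produces four cases indexed by $\epsilon$ and by the membership of $k'$. In each case one simply chains the relevant biconditionals to recover exactly (a) or (b) for $k$; for example, when $\epsilon=1$ and $k\in A_0$ (so $k'\in B_0$), the inductive hypothesis gives $m\in A\Leftrightarrow j\notin A$ and the doubling rule gives $j\notin A\Leftrightarrow 2^im+k\in A$, whence $m\in A\Leftrightarrow 2^im+k\in A$, which is (a). The remaining three cases are entirely analogous.

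The argument is essentially bookkeeping, so I do not expect a deep obstacle. The two points that demand care are verifying the inequality $j\geq N$ at every stage, so that Theorem A's recursion remains available, and tracking the parity of the digit sum, which flips exactly when the lowest bit $\epsilon$ equals $1$; getting this flip aligned with the switch between (a) and (b) is where a sign error could creep in. Once the parity bookkeeping is set up correctly, the four-way case check closes the induction.
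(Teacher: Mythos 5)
Your proof is correct. The paper does not prove this lemma itself but cites it from Chen--Tang \cite{CT}, and your argument --- induction on $i$, peeling off the lowest bit of $k$ and combining the doubling rules $2j\in A\Leftrightarrow j\in A$, $2j+1\in A\Leftrightarrow j\notin A$ from Theorem A with the parity flip $s(2k'+\epsilon)=s(k')+\epsilon$ --- is essentially the standard proof given in that cited source; the two points you flag (checking $j=2^{i-1}m+k'\geq N$ and aligning the digit-sum parity with the switch between (a) and (b)) are exactly the ones that need care, and your four-case check handles them correctly.
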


\begin{lemma}\label{lem3}
Let $A$ be a subset of $\mathbb{N}$ and $N$ be a positive integer such that $R_2(A,n)=R_2(\mathbb{N}\setminus A,n)$ for all $n\geq 2N-1$. If $0<\epsilon,\delta'<1$ and
\begin{equation}\label{eq2.3}
|\{n:n\leq x,R_2(A_0,n)=\frac{n}{8}+O(n^{1-\epsilon})\}|=x+O(x^{1-\delta'}),
\end{equation}
then
$$|\{n:\frac{x}{10}\leq n\leq x,R_2(A,n)=\frac{n}{8}+O(n^{\frac{1}{1+\epsilon}})\}|=\frac{9x}{10}+O(x^{1-\frac{\delta'}{1+\epsilon}}).$$
\end{lemma}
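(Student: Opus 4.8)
The plan is to transfer the density-one estimate \eqref{eq2.3} for $A_0$ to $A$ by exploiting the dyadic block structure of $A$ supplied by Lemma \ref{lem2}, thereby reducing a representation count at scale $n$ to representation counts for $A_0$ at the much smaller scale $2^i$. Fix $n$ with $x/10\le n\le x$ and choose $i$ so that $2^i$ is of order $n^{1/(1+\epsilon)}$; write $n=2^i n'+\rho$ with $n'=\lfloor n/2^i\rfloor$ and $0\le \rho<2^i$. For each representation $n=Y+Z$ with $Y<Z$ and $Y,Z\in A$, split $Y=2^iy+a$, $Z=2^iz+b$ with $0\le a,b<2^i$. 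Then either $a+b=\rho$ and $y+z=n'$ (no carry), or $a+b=\rho+2^i$ and $y+z=n'-1$ (carry). By Lemma \ref{lem2}, for $y,z\ge N$ membership is governed by the Thue--Morse class of the low part: $2^iy+a\in A$ iff $(a\in A_0\Leftrightarrow y\in A)$, and likewise for $z,b$.

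Writing $\alpha=[y\in A]$ and $\beta=[z\in A]$, the inner count of admissible low parts therefore depends on $(y,z)$ only through $(\alpha,\beta)$, and equals the number $c_{\alpha\beta}$ of ordered pairs $(a,b)$ with the prescribed sum ($\rho$ or $\rho+2^i$), prescribed Thue--Morse classes, and $a,b\in[0,2^i)$. The crucial point is that these four class-counts are balanced: $R_2(A_0,m)=R_2(B_0,m)$ (Dombi) forces $c_{11}=c_{00}+O(1)$, the swap $a\leftrightarrow b$ gives $c_{10}=c_{01}$, and the assumed estimate $R_2(A_0,m)=m/8+O(m^{1-\epsilon})$ (applied at $m=\rho$ and $m=\rho+2^i$) pins each $c_{\alpha\beta}$ to one quarter of the total with error $O((2^i)^{1-\epsilon})$. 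Consequently the inner count is essentially independent of the (unknown) distribution of $[y\in A]$, so summing the balanced inner counts over the roughly $n'$ admissible high pairs in both cases (and accounting for the ordering $Y<Z$) yields
$$R_2(A,n)=\frac{2^i n'}{8}+O\!\left(n'(2^i)^{1-\epsilon}\right)+O(2^i)=\frac{n}{8}+O\!\left(n^{1/(1+\epsilon)}\right),$$
using $2^i n'=n-\rho$, $n'\approx n/2^i$, and the choice $2^i\approx n^{1/(1+\epsilon)}$.

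For the density bookkeeping, the displayed estimate is valid whenever the two ``low'' moduli $\rho$ and $\rho+2^i$ are good in the sense of \eqref{eq2.3}; these depend on $n$ only through $\rho=n\bmod 2^i$. Applying \eqref{eq2.3} at scale $2^{i+1}$ bounds the number of bad residues $\rho\in[0,2^i)$ by $O((2^i)^{1-\delta'})$, and each bad residue class contains $\approx x/2^i$ integers $n\in[x/10,x]$; hence the exceptional set has size $O\!\left(x(2^i)^{-\delta'}\right)=O(x^{1-\delta'/(1+\epsilon)})$, which gives the claimed count $\tfrac{9x}{10}+O(x^{1-\delta'/(1+\epsilon)})$. (Since $n$ ranges over a bounded multiplicative interval, $i$ takes only $O(1)$ values, so one may fix $i$ and partition $[x/10,x]$ accordingly.)

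The main obstacle is the balance of the four class-counts in the carry case, where the constraint $a,b\in[0,2^i)$ deletes the $O(\rho)$ ordered pairs with $\min(a,b)\le\rho$ from the unrestricted representation count of $\rho+2^i$; one must verify that this truncation is itself balanced across the Thue--Morse classes to within $O((2^i)^{1-\epsilon})$, which is exactly where the goodness of the smaller modulus $\rho$ (again via \eqref{eq2.3}) enters. Controlling this truncation, together with the negligible contribution of the finitely many high parts with $y<N$ or $z<N$, is the only delicate point; the remaining steps are the routine error aggregation and the optimization of $i$ recorded above.
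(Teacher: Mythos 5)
Your proposal is correct and follows essentially the same route as the paper: fix a dyadic scale $2^k\asymp x^{1/(1+\epsilon)}$, use Lemma \ref{lem2} to reduce each block-pair count to one of the four Thue--Morse cross-representation counts $R_{D_1,D_2}$ at the moduli $r$ and $r+2^k$, balance these to $\tfrac14$ of the total via the hypothesis \eqref{eq2.3}, handle the carry case by the same truncation/inclusion--exclusion you flag as the delicate point, and bound the exceptional set by counting bad residues times $x/2^k$. The paper's proof is exactly this argument written out (with the $C_j^{ext}$ bookkeeping making your truncation step precise), so no further changes are needed.
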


\begin{proof}
Let $2^{k_0-2}<N\leq 2^{k_0-1}$ and let $k=\lfloor\frac{\log_2x}{1+\epsilon}\rfloor$. The integer $\frac{x}{10}<n\leq x$ is called bad if $n=2^km+r,~0\leq r<2^k$, where
$R_2(A_0,r)\neq \frac{r}{8}+O(2^{k(1-\epsilon)})$ or $R_2(A_0,r+2^k)\neq \frac{r+2^k}{8}+O(2^{k(1-\epsilon)})$. We are going to show that if $n$ is not bad, then
$$R_2(A,n)=\frac{n}{8}+O(n^{\frac{1}{1+\epsilon}}).$$

Suppose that $n$ is not bad. Then
$$R_2(A_0,r)=R_2(B_0,r)=\frac{r}{8}+O(2^{k(1-\epsilon)}),$$
and so
\begin{equation}\label{eq2.1}
R_{A_0,A_0}(r)=\frac{r}{4}+O(2^{k(1-\epsilon)}),~~R_{B_0,B_0}(r)=\frac{r}{4}+O(2^{k(1-\epsilon)}).
\end{equation}
It follows from
$$R_{A_0,A_0}(r)+R_{B_0,B_0}(r)+R_{A_0,B_0}(r)+R_{B_0,A_0}(r)=r+1$$
that
\begin{equation}\label{eq2.2}
R_{A_0,B_0}(r)=R_{B_0,A_0}(r)=\frac{r}{4}+O(2^{k(1-\epsilon)}).
\end{equation}
Similarly, we have
\begin{equation}\label{eq2.4}
R_{A_0,A_0}(r+2^k)=R_{B_0,B_0}(r+2^k)=\frac{r+2^k}{4}+O(2^{k(1-\epsilon)}).
\end{equation}
and
\begin{equation}\label{eq2.5}
R_{A_0,B_0}(r+2^k)=R_{B_0,A_0}(r+2^k)=\frac{r+2^k}{4}+O(2^{k(1-\epsilon)}).
\end{equation}

By Lemma \ref{lem2}, we know that
$$A\cap[0,x]=(A\cap[0,2^{k_0+k}-1])\cup\left(\cup_{j=2^{k_0}}^{\lfloor\frac{x}{2^k}\rfloor-1}(j2^k+C_j)\right)\cup(A\cap[\lfloor\frac{x}{2^k}\rfloor2^k,x]),$$
where $C_j=A_0\cap[0,2^k-1]$ if $j\in A$ and $C_j=B_0\cap[0,2^k-1]$ if $j\in \mathbb{N}\setminus A$. Define $\bar{C_j}=[0,2^k-1]\setminus C_j$, and $C_j^{ext}=A_0$ if $j\in A$, $C_j^{ext}=B_0$ if $j\in \mathbb{N}\setminus A$.
We have
\begin{eqnarray*}
R_2(A,n)&=&\sum\limits_{j=\lfloor\frac{n}{2^{k+1}}\rfloor+2}^{\lfloor\frac{n}{2^{k}}\rfloor-2^{k_0}}(|\{(a,a'):a=(\lfloor\frac{n}{2^{k}}\rfloor-j)2^k+c,a'=j2^k+c',0\leq c,c'<2^k,\\
&&a+a'=n,a,a'\in A\}|+|\{(a,a'):a=(\lfloor\frac{n}{2^{k}}\rfloor-j-1)2^k+c,a'=j2^k+c',\\
&&0\leq c,c'<2^k, a+a'=n,a,a'\in A\}|)+O(2^k).\\
&=&\sum\limits_{j=\lfloor\frac{n}{2^{k+1}}\rfloor+2}^{\lfloor\frac{n}{2^{k}}\rfloor-2^{k_0}}(|\{(c,c'):c\in C_{\lfloor\frac{n}{2^{k}}\rfloor-j},c'\in C_j,c+c'=n-\lfloor\frac{n}{2^{k}}\rfloor2^k\}|\\
&&+|\{(c,c'):c\in C_{\lfloor\frac{n}{2^{k}}\rfloor-j-1},c'\in C_j,c+c'=n-\lfloor\frac{n}{2^{k}}\rfloor2^k+2^k\}|)+O(2^k).
\end{eqnarray*}
By (\ref{eq2.1}) and (\ref{eq2.2}), we have
\begin{eqnarray*}
&&|\{(c,c'):c\in C_{\lfloor\frac{n}{2^{k}}\rfloor-j},c'\in C_j,c+c'=n-\lfloor\frac{n}{2^{k}}\rfloor2^k\}|\\
&=&R_{C_{\lfloor\frac{n}{2^{k}}\rfloor-j},C_j}(n-\lfloor\frac{n}{2^{k}}\rfloor2^k)=\frac{n-\lfloor\frac{n}{2^{k}}\rfloor2^k}{4}+O(2^{k(1-\epsilon)}).
\end{eqnarray*}
On the other hand,
\begin{eqnarray*}
&&|\{(c,c'):c\in C_{\lfloor\frac{n}{2^{k}}\rfloor-j-1},c'\in C_j,c+c'=n-\lfloor\frac{n}{2^{k}}\rfloor2^k+2^k\}|\\
&=&|\{(c,c'):c\in C_{\lfloor\frac{n}{2^{k}}\rfloor-j-1}^{ext},c'\in C_j^{ext},c+c'=n-\lfloor\frac{n}{2^{k}}\rfloor2^k+2^k\}|\\
&&-|\{(c,c'):c\geq 2^k,c\in C_{\lfloor\frac{n}{2^{k}}\rfloor-j-1}^{ext},c'\in C_j^{ext},c+c'=n-\lfloor\frac{n}{2^{k}}\rfloor2^k+2^k\}|\\
&&-|\{(c,c'):c'\geq2^k,c\in C_{\lfloor\frac{n}{2^{k}}\rfloor-j-1}^{ext},c'\in C_j^{ext},c+c'=n-\lfloor\frac{n}{2^{k}}\rfloor2^k+2^k\}|.
\end{eqnarray*}
By (\ref{eq2.4}) and (\ref{eq2.5}), we have
\begin{eqnarray*}
&&|\{(c,c'):c\in C_{\lfloor\frac{n}{2^{k}}\rfloor-j}^{ext},c'\in C_j^{ext},c+c'=n-\lfloor\frac{n}{2^{k}}\rfloor2^k+2^k\}|\\
&=&R_{C_{\lfloor\frac{n}{2^{k}}\rfloor-j}^{ext},C_j^{ext}}(n-\lfloor\frac{n}{2^{k}}\rfloor2^k+2^k)=\frac{n-\lfloor\frac{n}{2^{k}}\rfloor2^k+2^k}{4}+O(2^{k(1-\epsilon)}).
\end{eqnarray*}
If $c+c'=n-\lfloor\frac{n}{2^{k}}\rfloor2^k+2^k$, where $c\geq 2^k$, then $2^k\leq c<2^{k+1}$. Hence, $c\in C_{\lfloor\frac{n}{2^{k}}\rfloor-j-1}^{ext}$ if and only if $c-2^k\in \bar{C}_{\lfloor\frac{n}{2^{k}}\rfloor-j-1}$. Therefore,
\begin{eqnarray*}
&&|\{(c,c'):c\geq 2^k,c\in C_{\lfloor\frac{n}{2^{k}}\rfloor-j-1}^{ext},c'\in C_j^{ext},c+c'=n-\lfloor\frac{n}{2^{k}}\rfloor2^k+2^k\}|\\
&=&|\{(c,c'):c\in \bar{C}_{\lfloor\frac{n}{2^{k}}\rfloor-j-1},c'\in C_j,c+c'=n-\lfloor\frac{n}{2^{k}}\rfloor2^k\}|\\
&=&R_{\bar{C}_{\lfloor\frac{n}{2^{k}}\rfloor-j-1},C_j}(n-\lfloor\frac{n}{2^{k}}\rfloor2^k)=\frac{n-\lfloor\frac{n}{2^{k}}\rfloor2^k}{4}+O(2^{k(1-\epsilon)}).
\end{eqnarray*}
Similarly,
\begin{eqnarray*}
&&|\{(c,c'):c'\geq 2^k,c\in C_{\lfloor\frac{n}{2^{k}}\rfloor-j-1}^{ext},c'\in C_j^{ext},c+c'=n-\lfloor\frac{n}{2^{k}}\rfloor2^k+2^k\}|\\
&=&|\{(c,c'):c\in C_{\lfloor\frac{n}{2^{k}}\rfloor-j-1},c'\in \bar{C}_j,c+c'=n-\lfloor\frac{n}{2^{k}}\rfloor2^k\}|\\
&=&R_{C_{\lfloor\frac{n}{2^{k}}\rfloor-j-1},\bar{C}_j}(n-\lfloor\frac{n}{2^{k}}\rfloor2^k)=\frac{n-\lfloor\frac{n}{2^{k}}\rfloor2^k}{4}+O(2^{k(1-\epsilon)}).
\end{eqnarray*}
It follows that
\begin{eqnarray*}
|\{(c,c'):c\in C_{\lfloor\frac{n}{2^{k}}\rfloor-j-1},c'\in C_j,c+c'=n-\lfloor\frac{n}{2^{k}}\rfloor2^k+2^k\}|
=\frac{2^k}{4}-\frac{n-\lfloor\frac{n}{2^{k}}\rfloor2^k}{4}+O(2^{k(1-\epsilon)}).
\end{eqnarray*}
Hence,
\begin{eqnarray*}
R_2(A,n)=\sum_{j=\lfloor\frac{n}{2^{k+1}}\rfloor+2}^{\lfloor\frac{n}{2^{k}}\rfloor-2^{k_0}}\left(\frac{2^k}{4}+O(2^{k(1-\epsilon)})\right)+O(2^k)=\frac{n}{8}+O(n^{\frac{1}{1+\epsilon}}).
\end{eqnarray*}
By (\ref{eq2.3}), we have
\begin{eqnarray*}
|\{n:n\leq x,R_2(A,n)\neq\frac{n}{8}+O(n^{\frac{1}{1+\epsilon}})\}|
\leq |\{n:n\leq x, n~\text{is~bad}\}|
=O\left(2^{k(1-\delta')}\frac{x}{2^k}\right)=O(x^{1-\frac{\delta'}{1+\epsilon}}).
\end{eqnarray*}
Therefore,
$$|\{n:\frac{x}{10}\leq n\leq x,R_2(A,n)=\frac{n}{8}+O(n^{\frac{1}{1+\epsilon}})\}|=\frac{9x}{10}+O(x^{1-\frac{\delta'}{1+\epsilon}}).$$
This completes the proof.
\end{proof}

Finally, we prove our main results.
\begin{proof}[Proof of Theorems \ref{thm0} and \ref{thm1}]
For $0<\epsilon<\frac{2\log2-\log3}{40\log 2-8\log3}$, by Lemma \ref{lem1}, we know that there exists a $\delta'=\delta'(\epsilon)$ with $0<\delta'<1$ such that
\begin{eqnarray*}
&&|\{n:n\leq x,R_2(A_0,n)=\frac{n}{8}+O(n^{1-\epsilon})\}|\\
&=&\sum_{i=0}^{\delta'\log_{10}x}|\{n:\frac{x}{10^{i+1}}\leq n\leq \frac{x}{10^i},R_2(A_0,n)=\frac{n}{8}+O(n^{1-\epsilon})\}|+O\left(\frac{x}{10^{\delta'\log_{10}x}}\right)\\
&=&\sum_{i=0}^{\delta'\log_{10}x}\left(\frac{9x}{10^{i+1}}+O\left(\left(\frac{x}{10^i}\right)^{1-\delta'}\right)\right)+O(x^{1-\delta'})\\
&=&x+O(x^{1-\delta'}).
\end{eqnarray*}
Let $\theta=\frac{\epsilon}{1+\epsilon}$. Then $0<\theta<\frac{2\log2-\log3}{42\log 2-9\log3}$. By Lemma \ref{lem3}, we have
$$|\{n:\frac{x}{10}\leq n\leq x,R_2(A,n)=\frac{n}{8}+O(n^{1-\theta})\}|=\frac{9x}{10}+O(x^{1-\frac{\delta'}{1+\epsilon}}).$$
Similar to the previous argument, we can get the desired result.
\end{proof}

\section*{Acknowledgements}
The first author was supported by the National Natural Science Foundation of China, Grant No. 11771211. The second author was supported by the NKFIH Grant No. K129335. The third author was supported by the National Natural Science Foundation for Youth of China, Grant No. 11501299, the Natural Science Foundation of Jiangsu Province, Grant Nos. BK20150889, 15KJB110014 and the Startup Foundation for Introducing Talent of NUIST, Grant No. 2014r029.

\renewcommand{\refname}{Bibliography}

\end{document}